\title{Diagonally forced systems and the spectral signature of matrix cycles}
\author{Michael A.S. Thorne \\ British Antarctic Survey} 
\date{}
\begin{document}

\maketitle

\begin{abstract}
Given any square matrix, $\mathbf{M}$, whose diagonal elements are negative, and which is multiplied by a variable, $\sigma$, we wish to find the minimal $\sigma$ such that the eigenvalue of $\mathbf{M}_{\sigma}$ is exactly zero. 
By Gershgorin, we know that $\mathbf{M}_{\sigma}$ can be made stable by making $\sigma$ large enough. We prove a relation which analytically determines when and how we are able to find the value of $\sigma$ such that the maximal eigenvalue is exactly zero. In so doing, we prove the equivalence of the roots of the characteristic polynomial of $\mathbf{M}_{\sigma}$ and the eigenvalues that arise from a scaling operation on $\mathbf{M}$. Further, through the characteristic polynomial, we are able to isolate the dominant feedback cycles comprising the elements of the matrix which, under the action of $\sigma$, ensures the stability of the system. We then explore, using the stabilising and destabilising cycles within the coefficients of the characteristic polynomial, an intrinsic spectral signature associated with any matrix based on the size and sign (or zero) of its respective elements.
\end{abstract}

\begin{keywords}
stability of linearised systems, combinatorial matrix theory
\end{keywords}

\begin{AMS}
93D15, 05B99  
\end{AMS}

\section{Introduction} \label{intro-sec}
The use of a variable as a multiplier, acting on the diagonal values of a Jacobian, has long played a role as a means of perturbing a system [1] and as a mechanism to understand its stability [2]. However, the consequences of forcing the diagonal has not been analytically explored. The question we are concerned with is whether we can analytically find the exact value of the multiplying factor in order to be able to force the maximal (largest real-part) eigenvalue of the Jacobian to be zero, that is, the stability tipping point of the system under the action of diagonal forcing. In the process of resolving this question, we prove a relation between the multiplying factor and the eigenvalues that are determined through a scaling mechanism in which the absolute values of the diagonal of the Jacobian are used to divide through each respective row, and the resulting matrix subsequently translated by the identity. This relation leads to an explanation not only of why and when one can produce the exact value needed for stability, but also which feedbacks in the system contribute the most to the stability when driven by diagonal forcing. We then consider a construction based solely on the weighting of the elements of the characteristic polynomial coefficients, which can be used to assign an intrinsic spectral signature to every matrix based solely on its structure. We set the stage in the following section by looking at the characteristic polynomial of systems with a multiplying factor on their diagonal. 
\vspace{0.5cm}
\section{Background}

Let $\mathbf{A}$ be an $n \times n$ matrix with real-valued entries. 
Then, multiplying the diagonal entries $\mathbf{D}$ of $\mathbf{A}$ ($\mathbf{D}_{\mathbf{A}}$) by a variable $\sigma \in \mathbf{C}$, we generate the matrix $\mathbf{A}_{\sigma}$, \\

\begin{equation}
\mathbf{A}_{\sigma}=\mathbf{A}-(1-\sigma)\cdot \mathbf{D}_{\mathbf{A}}
\label{eq:sigma}
\end{equation}\\

The characteristic polynomial, $\Pi$, of $\mathbf{A}_{\sigma}$ ($\Pi_{\mathbf{A}_{\sigma}}$) is a monic polynomial in $x$, each of whose coefficients is a polynomial in $\sigma$:

\begin{equation}
  \Pi_{\mathbf{A}_{\sigma}} = \sum_{i=0}^n p_i x^i = \sum_{i=0}^n (\sum_{j=0}^{n-i} q_{i,j}\sigma^j) x^i
\label{eq:char}
\end{equation}
with $p_n=1$ and 

\begin{equation*}
p_i = \sum_{j=0}^{n-i} q_{i,j}\sigma^j
\label{eq:pi}
\end{equation*}\\
where the $q_{i,j}$ consist of signed terms of products of $n-i$ entries of $\mathbf{A}$.\\ 

We now want to consider certain constraints on $\mathbf{A}$. For this we create the following definition:   

\begin{definition}
If a $\sigma$ ($\operatorname{Re}(\sigma) > 0$) can be found for $\mathbf{A}_{\sigma}$ such that the maximal (largest real-part) eigenvalue ($\lambda_{max(\mathbf{A}_{\sigma})}$) is zero and any larger real-part of $\sigma$ results in $\operatorname{Re}(\lambda_{max(\mathbf{A}_{\sigma})}) < 0$, then $\mathbf{A}_{\sigma}$ is denoted $\sigma$-stable.
\end{definition}

One way to ensure that the system is $\sigma$-stable is if each of the sums of the $\binom{n}{i}$ sets of the negative of the diagonal elements (${a_{i,i}} \in \mathbf{A}, \forall i$),
\begin{equation*}
\{\sum_{i=1}^n (-a_{i,i}),\sum_{i \neq j}^n (-a_{i,i})(-a_{j,j}),\sum_{i \neq j \neq k}^n (-a_{i,i})(-a_{j,j})(-a_{k,k}),...\},
\end{equation*}\\
are positive, then the highest order of $\sigma$ in each $p_i$ will be positive. If the condition of each $p_i$ having a positive leading coefficient is satisfied, then by Gershgorin [3], $\mathbf{A}_{\sigma}$ can be made stable by ensuring a $\sigma$ ($\operatorname{Re}(\sigma) > 0$) is applied whose real-part is large enough.

\begin{theorem}\label{theorem}
Let $\mathbf{A}_{\sigma}$ be $\sigma$-stable. The real-valued roots 
$(\{\sigma_{i,1},...,\sigma_{i,k}\} \subset \mathbf{R})$ of each $p_i$ indicate the point of respective coefficient sign change in the characteristic polynomial given the value of $\sigma$ of $\mathbf{A}_{\sigma}$. Furthermore, the largest real-valued root for each $p_i$ indicates when the coefficient becomes positive and it remains positive for any larger (real-part of) $\sigma$ applied to $\mathbf{A}_{\sigma}$.   
\end{theorem}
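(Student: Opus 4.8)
The plan is to treat each coefficient $p_i$ as a univariate real polynomial in $\sigma$ and to reduce the statement to two elementary ingredients: the positivity of its leading coefficient and the intermediate value theorem. Since the notions of ``sign change'' and ``becomes positive'' are only meaningful for real arguments, I would read the theorem as a statement about the restriction of each $p_i$ to real $\sigma$, which is also the quantity governing the real part tracked in the definition of $\sigma$-stability.

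First I would pin down the leading coefficient $q_{i,n-i}$ of $p_i$. Expanding $\Pi_{\mathbf{A}_{\sigma}}=\det(x\mathbf{I}-\mathbf{A}_{\sigma})$ by principal minors gives $p_i=(-1)^{n-i}\sum_{|S|=n-i}\det\big((\mathbf{A}_{\sigma})_S\big)$, where $(\mathbf{A}_{\sigma})_S$ is the principal submatrix on the index set $S$. Each such submatrix has diagonal entries $\sigma a_{k,k}$ and unchanged off-diagonal entries, so in the Leibniz expansion only the identity permutation contributes the top power $\sigma^{n-i}$, every other permutation using at least two off-diagonal factors and hence a strictly lower power of $\sigma$. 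Collecting these terms yields
\begin{equation*}
q_{i,n-i}=(-1)^{n-i}\sum_{|S|=n-i}\prod_{k\in S}a_{k,k}=\sum_{|S|=n-i}\prod_{k\in S}(-a_{k,k}),
\end{equation*}
which is exactly the sum of products of the negatives of diagonal elements listed in the background. Because every diagonal entry $a_{k,k}$ is negative, each factor $-a_{k,k}$ is positive, so every product is positive and $q_{i,n-i}>0$; this recovers the positivity of the leading coefficients identified in the background as the route to $\sigma$-stability. In particular the degree of $p_i$ in $\sigma$ is exactly $n-i$ and $p_i(\sigma)\to+\infty$ as $\sigma\to+\infty$ along the real axis.

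The final step is the sign analysis. As a real polynomial, $p_i$ is continuous, so by the intermediate value theorem it cannot pass from a positive to a negative value without vanishing; hence $p_i$ keeps a constant nonzero sign on each open interval between consecutive real roots, and any change of sign occurs only at one of the real roots $\sigma_{i,1},\dots,\sigma_{i,k}$, which establishes the first assertion. For the second, let $\sigma_{i,k}$ denote the largest real root. On the ray $(\sigma_{i,k},\infty)$ there are no further roots, so $p_i$ has constant sign there, and since $p_i(\sigma)\to+\infty$ this sign must be positive; thus $p_i(\sigma)>0$ for every real $\sigma>\sigma_{i,k}$, so the coefficient becomes positive at $\sigma_{i,k}$ and remains positive thereafter.

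I expect the only genuine subtlety to be one of precision rather than difficulty: strictly speaking a real root forces a sign change only when its multiplicity is odd, so the first assertion should be read as identifying the real roots as the sole candidate locations of sign changes. I would flag this explicitly and note that it does not affect the eventual-positivity claim, which depends only on the absence of roots beyond $\sigma_{i,k}$ together with the sign of the leading coefficient $q_{i,n-i}$.
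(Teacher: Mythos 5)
Your intermediate-value-theorem half is sound and is, in substance, what the paper does (the paper phrases it via Descartes' rule of signs, but the content is the same: a real polynomial can change sign only at a real root, so beyond its largest real root its sign is constant and equal to its sign at $+\infty$; your remark about odd multiplicity is a fair sharpening). The gap is in the other half, where you establish that $p_i$ is eventually positive. Your principal-minor computation of the top coefficient, $q_{i,n-i}=\sum_{|S|=n-i}\prod_{k\in S}(-a_{k,k})$, is correct, but the assertion that it is positive ``because every diagonal entry $a_{k,k}$ is negative'' imports a hypothesis this theorem does not have. In Section 2, where the theorem lives, $\mathbf{A}$ is an arbitrary real matrix; the positivity of those diagonal sums is offered only as ``one way to ensure'' $\sigma$-stability, and strictly negative diagonals become a standing assumption only in Section 3, for $\mathbf{M}$. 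The stated hypothesis is $\sigma$-stability itself, a spectral condition, and it does not deliver your sign condition: it rules out positive diagonal entries (for large real $\sigma$ the Gershgorin discs separate), but not zero ones. For instance, $\mathbf{A}=\left(\begin{smallmatrix} 0 & 1 & 0\\ -1 & -1 & 1\\ 1 & 0 & -1\end{smallmatrix}\right)$ gives $\Pi_{\mathbf{A}_{\sigma}}=x^3+2\sigma x^2+(\sigma^2+1)x+(\sigma-1)$, which has $\lambda_{max}=0$ at $\sigma=1$ (roots $0,-1\pm i$) and is Hurwitz for every real $\sigma>1$; yet $p_0=\sigma-1$ has degree $1$ rather than $n=3$, and its coefficient of $\sigma^3$ is $\prod_k(-a_{k,k})=0$, not positive. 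So under the theorem's actual hypothesis both your ``degree exactly $n-i$'' claim and the positivity of $q_{i,n-i}$ can fail, even though the theorem's conclusion still holds.

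The repair is exactly the paper's route: derive eventual positivity from the hypothesis rather than from the matrix entries. By the definition of $\sigma$-stability, $\mathbf{A}_{\sigma}$ is stable for every real $\sigma$ large enough, and a necessary condition for a real monic polynomial to be Hurwitz is that all of its coefficients are positive; hence $p_i(\sigma)>0$ for all sufficiently large real $\sigma$. That is the only input your final step needs: beyond the largest real root of $p_i$ there are no sign changes, and the sign far out is positive, so $p_i>0$ on the whole ray. Your explicit computation of $q_{i,n-i}$ is a nice verification of the background's sufficient condition for $\sigma$-stability, but used as you use it, it proves the theorem only under a stronger, unstated hypothesis.
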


\begin{proof}
Each coefficient of the characteristic polynomial in $x$, which is in the field of $\mathbf{R}$, is itself a polynomial, $p_i$, in $\sigma$ and, by Descartes' rule of signs [4], the real roots reflect the changing sign of the $p_i$ and therefore of the coefficients of the characteristic polynomial in $x$. 

A necessary condition for stability of a system is that all the coefficients of the characteristic polynomial are positive. By the diagonal conditions and definition of $\sigma$-stable matrices, they can be made stable by increasing (the real-part of) $\sigma$. Therefore, any $\operatorname{Re}(\sigma)$ larger than the largest real root of each $p_i$ ensures it is a positive coefficient. 
\end{proof}

While a necessary condition for stability is that all the coefficients of a characteristic polynomial are positive, it is not sufficient. This point, along with Theorem~\ref{theorem}, leads to the following result. For this, we define the set 
\begin{equation*}
\Omega=\{\sigma_{0,1},...,\sigma_{0,k_0},...,\sigma_{i,1},...,\sigma_{i,k_i},...,\sigma_{n-1,1},...,\sigma_{n-1,k_{n-1}}\}
\end{equation*}
consisting of all the real-valued roots ($\Omega \subset \mathbf{R}$) of all the $p_i$ $(i=0,1,...,n-1)$ polynomials from the characteristic polynomial of a given $\mathbf{A}_{\sigma}$.

\begin{theorem}
If $\exists$ a $\sigma \in \mathbf{R}$ such that the maximal eigenvalue, $\lambda_{max(\mathbf{A}_{\sigma})}$ of $\mathbf{A}_{\sigma}$, is zero, 
then $\sigma = max(\Omega)$.
\label{lemreal}
\end{theorem}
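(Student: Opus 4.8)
The plan is to pin down the tipping value $\sigma^{\ast}$ (the real $\sigma$ at which $\lambda_{max(\mathbf{A}_{\sigma})}=0$) by a two-sided sandwich, establishing simultaneously that $\sigma^{\ast}\le max(\Omega)$ and $\sigma^{\ast}\ge max(\Omega)$, whence equality.

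First I would translate the hypothesis into a statement about the constant coefficient $p_0$. Since $\lambda_{max(\mathbf{A}_{\sigma^{\ast}})}=0$ means that $0$ is an eigenvalue of $\mathbf{A}_{\sigma^{\ast}}$, evaluating the characteristic polynomial at $x=0$ gives $p_0(\sigma^{\ast})=\Pi_{\mathbf{A}_{\sigma^{\ast}}}(0)=\det(-\mathbf{A}_{\sigma^{\ast}})=0$. Hence $\sigma^{\ast}$ is a real root of $p_0$, so $\sigma^{\ast}\in\Omega$ and therefore $\sigma^{\ast}\le max(\Omega)$. This is the easy inclusion, and it crucially uses that the hypothesis demands $\lambda_{max}$ be \emph{exactly} $0$ rather than merely having zero real part: this forces the critical crossing to be a genuine real eigenvalue at the origin, letting me localise it at a root of $p_0$ rather than at a root arising from a complex-conjugate pair meeting the imaginary axis.

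The reverse inequality is the crux. Here I would use that, by $\sigma$-stability, every $\sigma>\sigma^{\ast}$ yields $\operatorname{Re}(\lambda_{max(\mathbf{A}_{\sigma})})<0$, i.e.\ all eigenvalues of $\mathbf{A}_{\sigma}$ lie in the open left half-plane. A monic real polynomial whose roots all have negative real part factors into linear terms $(x+a)$ with $a>0$ and quadratic terms $(x^2+bx+c)$ with $b,c>0$; since a product of polynomials with strictly positive coefficients again has strictly positive coefficients, every $p_i(\sigma)>0$ for all $i$ whenever $\sigma>\sigma^{\ast}$. Consequently no $p_i$ can vanish on $(\sigma^{\ast},\infty)$, so every element of $\Omega$ is at most $\sigma^{\ast}$, giving $max(\Omega)\le\sigma^{\ast}$. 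Combining the two inequalities yields $\sigma^{\ast}=max(\Omega)$.

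The main obstacle I anticipate is exactly this reverse direction: a priori $max(\Omega)$ might be realised by the largest root of some coefficient $p_m$ with $m\neq 0$, and it is not obvious that such a root cannot exceed the $p_0$-root that the hypothesis supplies. The resolution rests entirely on the standard necessary condition for Hurwitz stability (all coefficients strictly positive) together with the $\sigma$-stability assumption that stability persists for all larger $\sigma$; this is what forbids any coefficient from changing sign above $\sigma^{\ast}$. I would also note that Theorem~\ref{theorem} already supplies the ``becomes positive and remains positive for larger $\sigma$'' behaviour of each $p_i$, so the argument can be phrased directly in terms of the largest real root of each coefficient rather than re-deriving the positivity from scratch.
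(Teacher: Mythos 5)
Your proof is correct---granting, as both you and the paper implicitly do, that $\mathbf{A}_{\sigma}$ is $\sigma$-stable, so that stability persists for every larger real $\sigma$---and it reaches the result by a genuinely different route in each half of the sandwich. For $\sigma \le \max(\Omega)$, the paper argues through Obrechkoff's theorem: for $\sigma$ beyond $\max(\Omega)$ all coefficients $p_i(\sigma)$ are positive (Theorem~\ref{theorem}), and a polynomial with positive coefficients cannot have a real root in the right half-plane, contradicting the root at $x=0$. You instead observe directly that $\lambda_{max(\mathbf{A}_{\sigma})}=0$ forces $p_0(\sigma)=\det(-\mathbf{A}_{\sigma})=0$, so $\sigma$ is itself a real root of $p_0$ and hence an element of $\Omega$; this is more elementary, dispenses with the Obrechkoff citation altogether, and makes transparent where the hypothesis ``eigenvalue exactly zero'' (rather than merely zero real part) enters. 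For $\sigma \ge \max(\Omega)$, the paper appeals to the necessary condition of stability (``all coefficients must be positive'') evaluated at the tipping value itself; read literally this is loose, since $p_0(\sigma)=0$ there, and positivity of the $p_i$ at a single point would not in any case bound their roots from above (a coefficient such as $(\sigma-1)^2$ is positive at $\sigma=0$ yet has its largest root at $1$). Your version closes that gap: $\sigma$-stability gives strict Hurwitz stability on the entire ray $(\sigma,\infty)$, the factorization into $(x+a)$ and $(x^2+bx+c)$ terms gives strict positivity of every $p_i$ there, and hence no element of $\Omega$ can exceed $\sigma$. The only point you should state explicitly is that the theorem must be read with $\sigma$-stability as a standing hypothesis, as the paper's surrounding text intends; without it, some $p_i$ could change sign again at a value larger than your $\sigma^{\ast}$, and the claimed equality would fail.
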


\begin{proof}
By the necessary condition of stability, all coefficients must be positive, which means that $\sigma \ge max(\Omega)$. 
By the proof of Obrechkoff [5], if a polynomial has positive coefficients and has a root in the right-half plane, then the root cannot lie on the real axis. This implies that if $\sigma \in \mathbf{R}$ then $\sigma \leq max(\Omega)$. Therefore, $\sigma = max(\Omega)$.
\end{proof}

A consequence of Theorem \ref{lemreal} is that if all the coefficients of $\mathbf{A}_{\sigma}$ are positive, and if there exists a positive root of $\mathbf{A}_{\sigma}$, then it is necessarily complex-valued. 
\vspace{0.5cm}
\section{Roots and spectrum equivalence of diagonal forcing and scaling}

 We now focus on Jacobians whose diagonal elements are all negative. Let $\mathbf{M}$ be a real valued square matrix with $m_{i,i} < 0$, 
 $\forall i$, (therefore $\prod_1^n m_{i,i} \neq 0$), 
 and where all other $m_{i,j}, i \neq j$, have either sign, or zero, depending on the type of ecological relation, as described in the introduction. We restate Eq. \ref{eq:sigma}, again with the variable $\sigma \in \mathbf{C}$,
 
\begin{equation*}
\mathbf{M}_{\sigma}=\mathbf{M}-(1-\sigma)\cdot \mathbf{D}_{\mathbf{M}}
\label{eq:sigmaMP}
\end{equation*}\\

Since all the diagonal elements in $\mathbf{M}$ are negative, for simplicity of expression we can alter this expression to reflect the absolute value of the diagonal elements, 

\begin{equation}
\mathbf{M}_{\sigma}=\mathbf{M}+(1-\sigma)\cdot |\mathbf{D}_{\mathbf{M}}|
\label{eq:sigmaM}
\end{equation}\\

Similarly to Eq. \ref{eq:char}, we want to define the characteristic polynomial, $\Pi$, of $\mathbf{M}_{\sigma}$ ($\Pi_{\mathbf{M}_{\sigma}}$) where, 

\begin{equation*}
  \Pi_{\mathbf{M}_{\sigma}} = \sum_{i=0}^n r_i x^i = \sum_{i=0}^n (\sum_{j=0}^{n-i} s_{i,j}\sigma^j) x^i
\label{eq:charM}
\end{equation*}
with $r_n=1$ and 

\begin{equation}
r_i = \sum_{j=0}^{n-i} s_{i,j}\sigma^j
\label{eq:piM}
\end{equation}\\

We now construct a hollow matrix, by dividing each row by the absolute value of its diagonal element and then translating it by the identity [2],

\begin{equation}
\bar{\mathbf{M}}_0 = |\mathbf{D}_{\mathbf{M}}|^{-1} \mathbf{M}+\mathbf{I}.
\label{eq:scale}
\end{equation}

Through $\Pi_{\mathbf{M}_{\sigma}}$, we can relate the spectrum of $\bar{\mathbf{M}}_0$ to values of $\sigma$ which generate zeros of  $\mathbf{M}_{\sigma}$.

\begin{theorem}
The characteristic polynomial of $\bar{\mathbf{M}}_0$ ($\Pi_{\bar{\mathbf{M}}_0}$) in $x$ is the same polynomial as the last coefficient $r_0$ of $\Pi_{\mathbf{M}_{\sigma}}$ in $\sigma$ ($\Pi_{\bar{\mathbf{M}}_0} = r_0$, $r_0 = (-1)^n$ $\det$$(\mathbf{M}_{\sigma})$). Let $\lambda_{\bar{\mathbf{M}}_0}$ be an eigenvalue of $\bar{\mathbf{M}}_0$. Therefore when $\sigma=\lambda_{\bar{\mathbf{M}}_0}$, $r_0$ vanishes, forcing $\mathbf{M}_{\sigma}$ to have an eigenvalue of zero.
\label{charpo}
\end{theorem}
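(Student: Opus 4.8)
The plan is to obtain an explicit factorization of $\mathbf{M}_{\sigma}$ through the hollow matrix $\bar{\mathbf{M}}_0$ and then read off the determinant identity. First I would make $\mathbf{M}_{\sigma}$ explicit: writing $\mathbf{D}_{\mathbf{M}}$ for the diagonal part of $\mathbf{M}$ and $\mathbf{N}$ for its off-diagonal part, so that $\mathbf{M} = \mathbf{D}_{\mathbf{M}} + \mathbf{N}$, Eq.~\ref{eq:sigmaM} gives diagonal entries $m_{i,i} + (1-\sigma)|m_{i,i}| = \sigma m_{i,i}$ (using $|m_{i,i}| = -m_{i,i}$) and leaves the off-diagonal entries unchanged, so $\mathbf{M}_{\sigma} = \sigma \mathbf{D}_{\mathbf{M}} + \mathbf{N}$. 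Next I would identify $r_0$: since $\Pi_{\mathbf{M}_{\sigma}}(x) = \det(x\mathbf{I} - \mathbf{M}_{\sigma})$, its constant term is its value at $x=0$, namely $\det(-\mathbf{M}_{\sigma}) = (-1)^n\det(\mathbf{M}_{\sigma})$, which already establishes the parenthetical claim $r_0 = (-1)^n\det(\mathbf{M}_{\sigma})$.

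The heart of the argument is a single factorization. From Eq.~\ref{eq:scale}, $\bar{\mathbf{M}}_0 = |\mathbf{D}_{\mathbf{M}}|^{-1}\mathbf{M} + \mathbf{I} = |\mathbf{D}_{\mathbf{M}}|^{-1}(\mathbf{D}_{\mathbf{M}} + \mathbf{N}) + \mathbf{I}$. Because each diagonal entry contributes $m_{i,i}/|m_{i,i}| = -1$, we have $|\mathbf{D}_{\mathbf{M}}|^{-1}\mathbf{D}_{\mathbf{M}} = -\mathbf{I}$, so this collapses to $\bar{\mathbf{M}}_0 = |\mathbf{D}_{\mathbf{M}}|^{-1}\mathbf{N} = -\mathbf{D}_{\mathbf{M}}^{-1}\mathbf{N}$, confirming it is hollow. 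I would then verify directly that $\mathbf{D}_{\mathbf{M}}(\sigma\mathbf{I} - \bar{\mathbf{M}}_0) = \sigma\mathbf{D}_{\mathbf{M}} - \mathbf{D}_{\mathbf{M}}\bar{\mathbf{M}}_0 = \sigma\mathbf{D}_{\mathbf{M}} + \mathbf{N} = \mathbf{M}_{\sigma}$. This is the crucial step: it exhibits $\mathbf{M}_{\sigma}$ as the diagonal matrix $\mathbf{D}_{\mathbf{M}}$ times the pencil $\sigma\mathbf{I} - \bar{\mathbf{M}}_0$ of the scaled matrix, with $\sigma$ playing the role of the spectral variable of $\bar{\mathbf{M}}_0$.

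With the factorization in hand, taking determinants is routine: $\det(\mathbf{M}_{\sigma}) = \det(\mathbf{D}_{\mathbf{M}})\det(\sigma\mathbf{I} - \bar{\mathbf{M}}_0) = \det(\mathbf{D}_{\mathbf{M}})\,\Pi_{\bar{\mathbf{M}}_0}(\sigma)$. Combining this with $r_0 = (-1)^n\det(\mathbf{M}_{\sigma})$ and $(-1)^n\det(\mathbf{D}_{\mathbf{M}}) = (-1)^n\prod_i m_{i,i} = \prod_i |m_{i,i}|$ yields $r_0 = \big(\prod_i |m_{i,i}|\big)\,\Pi_{\bar{\mathbf{M}}_0}(\sigma)$. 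Since $\prod_i |m_{i,i}| \neq 0$ is a fixed positive scalar independent of $\sigma$, the two polynomials are proportional and hence share exactly the same root set (indeed they agree once one accounts for $\Pi_{\bar{\mathbf{M}}_0}$ being monic in $x$ while $r_0$ has leading $\sigma$-coefficient $\prod_i |m_{i,i}|$). The vanishing conclusion is then immediate: if $\lambda_{\bar{\mathbf{M}}_0}$ is an eigenvalue of $\bar{\mathbf{M}}_0$ then $\Pi_{\bar{\mathbf{M}}_0}(\lambda_{\bar{\mathbf{M}}_0}) = 0$, so setting $\sigma = \lambda_{\bar{\mathbf{M}}_0}$ forces $r_0 = 0$, i.e. $\det(\mathbf{M}_{\sigma}) = 0$, making $\mathbf{M}_{\sigma}$ singular and hence giving it a zero eigenvalue.

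I expect the only genuine obstacle to be careful sign bookkeeping, since the absolute values on the diagonal interact with the negativity of the $m_{i,i}$ in three places — reducing $\mathbf{M}_{\sigma}$ to $\sigma\mathbf{D}_{\mathbf{M}} + \mathbf{N}$, collapsing $\bar{\mathbf{M}}_0$, and evaluating $(-1)^n\det(\mathbf{D}_{\mathbf{M}})$ — and these signs must be tracked so that they cancel into the clean positive factor $\prod_i |m_{i,i}|$. The secondary point worth stating precisely is that $\Pi_{\bar{\mathbf{M}}_0} = r_0$ holds as an equality of root sets up to this positive normalizing constant, rather than as a literal identity of coefficients unless $\prod_i |m_{i,i}| = 1$; this distinction is immaterial to the zero-eigenvalue conclusion but should be flagged.
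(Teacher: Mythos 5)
Your proposal is correct and takes essentially the same route as the paper: your factorization $\mathbf{M}_{\sigma} = \mathbf{D}_{\mathbf{M}}(\sigma\mathbf{I}-\bar{\mathbf{M}}_0)$ is exactly the identity the paper exploits when it multiplies $\det(\bar{\mathbf{M}}_0-\sigma\mathbf{I})$ through by $\det(|\mathbf{D}_{\mathbf{M}}|)$ and collapses the product to $\det(\mathbf{M}_{\sigma})$, and both arguments then read off that the roots of $r_0$ in $\sigma$ are the eigenvalues of $\bar{\mathbf{M}}_0$. Your closing caveat is also well taken: what the argument actually yields is $r_0(\sigma) = \bigl(\prod_i |m_{i,i}|\bigr)\,\Pi_{\bar{\mathbf{M}}_0}(\sigma)$, i.e. proportionality with identical root sets rather than literal equality of coefficients --- a precision the paper glosses over, and one confirmed by its own $4\times 4$ example, where $r_0$ has leading term $(afkp)\,\sigma^4$ rather than being monic.
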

\begin{proof}
We need to show that the characteristic polynomial of $\bar{\mathbf{M}}_0$ ($\Pi_{\bar{\mathbf{M}}_0}$) in $x$ is the same polynomial as the last coefficient $r_0$ of $\Pi_{\mathbf{M}_{\sigma}}$ in $\sigma$, so we rewrite the characteristic polynomial of $\bar{\mathbf{M}}_0$ in terms of $\sigma$:\\
\begin{equation*}
\det(\bar{\mathbf{M}}_0 - \sigma \cdot \mathbf{I})=0
\end{equation*}\\
which we can rewrite using Eq. \ref{eq:scale} as\\
\begin{equation*}
\det(|\mathbf{D}_{\mathbf{M}}|^{-1} \cdot \mathbf{M} + \mathbf{I} - \sigma \cdot \mathbf{I})=0.
\end{equation*}\\
Multiplying det($|\mathbf{D}_{\mathbf{M}}|$) through both sides gives\\
\begin{equation*}
\det(|\mathbf{D}_{\mathbf{M}}|) \cdot \det(|\mathbf{D}_{\mathbf{M}}|^{-1} \cdot \mathbf{M} + \mathbf{I} - \sigma \cdot \mathbf{I})=0 
\end{equation*}\\
which simplifies to\\ 
\begin{equation*}
\det(|\mathbf{D}_{\mathbf{M}}| \cdot (|\mathbf{D}_{\mathbf{M}}|^{-1} \cdot \mathbf{M}+\mathbf{I} - \sigma \cdot \mathbf{I}))=\det(\mathbf{M}+(1-\sigma)\cdot |\mathbf{D}_{\mathbf{M}}|) = 0.
\end{equation*}\\
This last expression can be rewritten using Eq. \ref{eq:sigmaM} as\\  
\begin{equation*}
\det(\mathbf{M}_{\sigma}) = 0, 
\end{equation*}\\
which is the last coefficient of $\Pi_{\mathbf{M}_{\sigma}}$. Therefore, when $\sigma=\lambda_{\bar{\mathbf{M}}_0}$, the last coefficient of $\Pi_{\mathbf{M}_{\sigma=\lambda_{\bar{\mathbf{M}}_0}}}$ vanishes. With $r_0(\sigma) = 0$, the remaining polynomial is\\
\begin{align*}
  & x^n + r_{n-1}(\sigma)x^{n-1}+...+r_2(\sigma)x^2+r_1(\sigma)x = \\
  & \phantom{{}={}} \begin{aligned}[t] x(x^{n-1}+r_{n-1}(\sigma)x^{n-2}+...r_2(\sigma)x+r_1(\sigma)).\\
 \end{aligned}
\end{align*}\\
Pulling out an $x$ shows clearly that there is a root of zero, with the remaining polynomial in $x$ of degree $n-1$.
\label{scale}
\end{proof}
\begin{corollary}
Let $\lambda_{max(\bar{\mathbf{M}}_0)}$ be the maximal (real-part) eigenvalue of $\bar{\mathbf{M}}_0$. \\
When $\sigma=\lambda_{max(\bar{\mathbf{M}}_0)}$, $r_0(\sigma)$ of $\Pi_{\mathbf{M_{\sigma}}}$ vanishes, forcing $\mathbf{M}_{\sigma}$ to have an eigenvalue of zero. Since $\lambda_{max(\bar{\mathbf{M}}_0)}$ is the maximal eigenvalue of $\bar{\mathbf{M}}_0$, then $\sigma=\lambda_{max(\bar{\mathbf{M}}_0)}$ is the maximal zero of $r_0(\sigma)$.
\end{corollary}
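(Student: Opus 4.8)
The plan is to treat this as an immediate consequence of Theorem~\ref{charpo}, which already establishes the polynomial identity $\Pi_{\bar{\mathbf{M}}_0} = r_0$. The entire argument rests on promoting that identity from ``these are the same polynomial'' to ``these have the same complete zero set,'' and then invoking the defining property of $\lambda_{max(\bar{\mathbf{M}}_0)}$ as the spectral element of largest real part. No new computation should be required.

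First I would recall that, by Theorem~\ref{charpo}, $r_0(\sigma)$ viewed as a polynomial in $\sigma$ is literally the characteristic polynomial $\Pi_{\bar{\mathbf{M}}_0}$ with $x$ relabelled as $\sigma$. Since the roots of the characteristic polynomial of a matrix are exactly its eigenvalues (counted with multiplicity), the zero set of $r_0(\sigma)$ coincides precisely with the spectrum of $\bar{\mathbf{M}}_0$. The first assertion of the corollary is then just the special case of Theorem~\ref{charpo} obtained by taking the distinguished eigenvalue $\lambda_{\bar{\mathbf{M}}_0} = \lambda_{max(\bar{\mathbf{M}}_0)}$: substituting $\sigma = \lambda_{max(\bar{\mathbf{M}}_0)}$ makes $r_0$ vanish, hence $\det(\mathbf{M}_{\sigma}) = 0$, so $\mathbf{M}_{\sigma}$ carries a zero eigenvalue.

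For the second assertion I would argue directly from the correspondence just established. Because $\bar{\mathbf{M}}_0$ is real, $r_0$ has real coefficients and its complex zeros occur in conjugate pairs; ordering the zeros of $r_0(\sigma)$ by real part therefore reproduces exactly the ordering of the eigenvalues of $\bar{\mathbf{M}}_0$ by real part. Since $\lambda_{max(\bar{\mathbf{M}}_0)}$ is by definition the eigenvalue of largest real part, it is the zero of $r_0(\sigma)$ of largest real part, that is, the maximal zero.

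The step I expect to require the most care is fixing the precise sense of ``maximal.'' The earlier definitions in the paper use \emph{maximal} to mean largest real part of an eigenvalue, so I would read ``maximal zero of $r_0(\sigma)$'' in the same sense, and the argument then closes cleanly. If instead one intends the maximal real-valued zero, one must add the observation that under $\sigma$-stability the decisive root is real (in the spirit of Theorem~\ref{lemreal}), in which case the largest-real-part and largest-real-value readings agree at $\lambda_{max(\bar{\mathbf{M}}_0)}$. Either way, the substantive content is carried entirely by the polynomial identity of Theorem~\ref{charpo}, and the corollary follows without further calculation.
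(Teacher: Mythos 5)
Your proposal is correct and takes essentially the same route as the paper: both treat the corollary as an immediate consequence of the polynomial identity $\Pi_{\bar{\mathbf{M}}_0} = r_0$ established in Theorem~\ref{charpo}, so that the zeros of $r_0(\sigma)$ are precisely the eigenvalues of $\bar{\mathbf{M}}_0$ and the maximal eigenvalue is therefore the maximal zero. Your extra remarks on conjugate pairs and on pinning down the sense of ``maximal'' are more care than the paper's own two-line proof takes, but the substance coincides.
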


\begin{proof}
By Theorem~\ref{charpo}, $\Pi_{\bar{\mathbf{M}_0}}$ in $x$ and $r_0$ of $\Pi_{\mathbf{M_{\sigma}}}$ in $\sigma$ are equivalent polynomials. Therefore, the maximal zero of  
 $\Pi_{\bar{\mathbf{M}_0}}$ is the maximal zero of $r_0$.
\end{proof}

The above shows that when an eigenvalue of the hollow scaled matrix $\bar{\mathbf{M}}_0$ is set as $\sigma$, then $\mathbf{M}_{\sigma}$ produces a zero eigenvalue. This does not, however, ensure that the zero of $\mathbf{M}_{\sigma=\lambda_{max(\bar{\mathbf{M}}_0)}}$ is maximal. There are degenerate cases where the zero of $\mathbf{M}_{\sigma=\lambda_{max(\bar{\mathbf{M}}_0)}}$ is not maximal and there is at least one positive (real-part) eigenvalue, which is due to the value of $\sigma$ in relation to the roots of the other $r_i$ ($i \neq 0$), which play a greater role, under the action of $\sigma$, in driving the system toward stability than $r_0$. 

However, when $\sigma=\lambda_{\max(\bar{\mathbf{M}}_0)} \implies \lambda_{\max(\mathbf{M}_{\sigma})} = 0$, then the minimal value of $\sigma$ for which $\lambda_{\max(\mathbf{M}_{\sigma})} = 0$ has been found, and we have analytically determined the smallest value which can be multiplied by the diagonal elements of $\mathbf{M}$ in order to arrive at its point of stability. 

As a consequence, we have also been able to determine that the length of the critical feedback (that which plays the crucial role in the stability) of the system (under the action of $\sigma$) is the largest possible cycle size (i.e. the lengths of the cycles comprising $r_0$ as described in Eq.~\ref{eq:piM}). By Theorem~\ref{charpo}, we know that 

\begin{equation*}
r_0 = \sum_{j=0}^{n} s_{0,j}\sigma^j = \sum_{j=0}^{n} s_{0,j}\lambda_{\max(\bar{\mathbf{M}}_0)}^j = 0.
\end{equation*}\\

and have therefore been able to isolate the cycles themselves that have played the dominant role ($\sum_{j=0}^{n} s_{0,j}$), under the action of $\lambda_{\max(\bar{\mathbf{M}}_0)}$, that ensure the system becomes stable.\\ 

In the degenerate cases where the scaling process produces a maximal eigenvalue that is positive, although analytically intractable, the critical feedback, under the action of $\sigma$, is determined to be of size $< n$.

\section{Matrix cycles under the action of $\sigma$}
Previously, the structure of the destabilising cycles, or tipping cycles,  in each coefficient of the characteristic polynomial in various sign restricted forms was explored [6]. We can extend the idea of tipping cycles to systems that are under the action of $\sigma$. Consider the following matrix, 
\begin{equation*}
\left[ \begin{array}{rrrr} -a\sigma & b & c & d  \\ -e  & -f\sigma & g & h \\ -i & -j & -k\sigma & l \\ -m&-n&-o&-p\sigma  \end{array} \right]. 
\end{equation*}\\
Then the coefficient $r_0$ in $\sigma$ will have the form, \\
\begin{align*}
(afkp)\sigma^4 + (dfkm + ahkn + aflo + cfip + agjp + bekp)\sigma^2 \\ + (bhkm + cflm + agln + bgip - dekn - dfio - ahjo - cejp)\sigma^1 \\ + (dgjm + bglm + chin + dejo + belo - chjm - dgin - celn - bhio) \sigma^0 
\end{align*}

In the coefficient of $\sigma^0$, the number of tipping cycles, or destabilising cycles, is 4, that is $\{chjm,dgin,celn,bhio\}$, while the total number of cycles is 9, so the coefficient sensitivity of $\sigma^0$ is $4/9$. Table 1 shows the coefficient sensitivity for each coefficient for the same sign restricted form (negative lower diagonal and diagonal elements, positive upper diagonal elements) without diagonal forcing up to size 8 (a), along with the same numbers found in $r_0$ (b), but decomposed by the power of $\sigma$. As can be seen, the total summed values of both the denominator and numerator of (b) for a given size is equal to the value of $a_0$ in (a). 

\begin{table}[h]
\begin{adjustwidth}{-.8cm}{}
\tiny
\begin{tabular}{c c c c c c c c c c c}
    a) & $n$ & $a_8$$x^8$ & $a_7$$x^7$ & $a_6$$x^6$ & $a_5$$x^5$ & $a_4$$x^4$ & $a_3$$x^3$ & $a_2$$x^2$ & $a_1$$x$ & $a_0$ \\
      & $2$ &&&& & &  & + & + & + \\
      & $3$ & &&&& & + & + & + & 1/6 \\
      &$4$ & &&&& + & + & + & 4/24 & 8/24 \\
      &$5$ &&&& + & + & + & 10/60 & 40/120 & 52/120 \\
      &$6$ & &&+&+&+ & 20/120 & 120/360 & 312/720 & 344/720 \\
      &$7$ & &+&+&+& 35/210 & 280/840 & 1092/2520 & 2408/5040 &  2488/5040 \\
      &$8$ &+&+&+& 56/336 & 560/1680 & 2912/6720 & 9632/20160 & 19904/40320 & 20096/40320 \\
    \\
b) & $n$ & $s_8$$\sigma^8$ & $s_7$$\sigma^7$ & $s_6$$\sigma^6$ & $s_5$$\sigma^5$ & $s_4$$\sigma^4$ & $s_3$$\sigma^3$ & $s_2$$\sigma^2$ & $s_1$$\sigma$ & $s_0$ \\
&$2$  &&&&&  && 0/1 &  & 0/1 \\
& $3$  &&&&&& 0/1 &  & 0/3 & 1/2 \\
& $4$  & &&&& 0/1 & & 0/6 & 4/8 & 4/9 \\
&$5$  &&&& 0/1 &  & 0/10 & 10/20 & 20/45  & 22/44 \\
&$6$  &&&0/1&& 0/14 & 20/40 & 60/135 & 132/264 & 132/266\\
&$7$ & & 0/1 & & 0/21 & 35/70 & 140/315 & 462/924 & 924/1855 & 927/1854 \\
&$8$  &0/1&  & 0/28 & 56/112 & 280/630 & 1232/2464 & 3696/7420 &  7416/14832 & 7416/14833 \\

\end{tabular}
\end{adjustwidth}{}
\normalfont
\label{table:cfs}
\caption{\textbf{Table 1} \textit{Table a shows the coefficient sensitivity (the number of destabilising cycles, or tipping cycles) for each coefficient for systems that have negative lower diagonal and diagonal entries and positive upper diagonal entries, taken from [6]. Table b is the coefficient sensitivity for the $r_0$ coefficients in $\sigma$. The total values of the denominator and numerator for a given size matrix in table b is equal to the value of $a_0$ in table a, but with the values decomposed into powers of $\sigma$.}}
\end{table}

In [6], weighted matrices of the elements of the tipping cycle sets were constructed. This can be extended to both the tipping cycles (those cycles that destabilise the system) and the positive cycles. For example, the weighted matrix for the tipping cycles in the coefficient of $\sigma^0$ above is 
\begin{equation*}
\left[ \begin{array}{rrrrr} 0& 1 & 2 & 1  \\ 1 & 0  & 1 & 2  \\ 2 & 1 & 0  & 1 \\ 1&2&1&0  \end{array} \right]. 
\end{equation*}

Generating these matrices for all the positive and negative cycles for all the coefficients of $r_0$ above yields the equation, \\

\resizebox{0.9\hsize}{!}
{$\left[ \begin{array}{rrrrr} 1& 0 & 0 & 0  \\ 0 & 1  & 0 & 0  \\ 0 & 0 & 1  & 0 \\ 0&0&0&1  \end{array} \right] \sigma^4 + \left[ \begin{array}{rrrrr} 3 & 1 & 1 & 1  \\ 1 & 3  & 1 & 1  \\ 1 & 1 & 3  & 1 \\ 1&1&1&3  \end{array} \right] \sigma^2 + \left( \left[ \begin{array}{rrrrr} 1& 2 & 1 & 0  \\ 0 & 1  & 2 & 1  \\ 1 & 0 & 1  & 2 \\ 2&1&0&1  \end{array} \right] - \left[ \begin{array}{rrrrr} 1& 0 & 1 & 2  \\ 2 & 1  & 0 & 1  \\ 1 & 2 & 1  & 0 \\ 0&1&2&1  \end{array} \right] \right) \sigma^1 \\+ \left( \left[ \begin{array}{rrrrr} 0& 2 & 1 & 2  \\ 2 & 0  & 2 & 1  \\ 1 & 2 & 0 & 2 \\ 2&1&2&0  \end{array} \right] - \left[ \begin{array}{rrrrr} 0& 1 & 2 & 1  \\ 1 & 0  & 1 & 2  \\ 2 & 1 & 0  & 1 \\ 1&2&1&0  \end{array} \right] \right)\sigma^0$} \\

\vspace{0.2cm}
A spectral signature could then be associated with this matrix polynomial. However, in order to restate the matrix polynomial as a Frobenius companion matrix, we would need to ensure it is monic. To do this, we could draw on Theorem 3.1 to reformulate the same polynomial, but in $x$ instead of $\sigma$. Scaling the matrix, 
\begin{equation*}
\left[ \begin{array}{rrrr} 0 & b/a & c/a & d/a  \\ -e/f  & 0 & g/f & h/f \\ -i/k & -j/k & 0 & l/k \\ -m/p&-n/p&-o/p&0  \end{array} \right]
\end{equation*}
which, if we rewrite as 
\begin{equation*}
\left[ \begin{array}{rrrr} 0 & q & r & s  \\ -t  & 0 & u & v \\ -w & -x & 0 & y \\ -z&-\alpha&-\beta&0  \end{array} \right]
\end{equation*}
yields the matrix polynomial \\

\hspace{0.5cm}
\resizebox{0.8\hsize}{!}
{$x^4 + \left[ \begin{array}{rrrrr} 0&1 & 1 & 1  \\ 1 & 0  & 1 & 1  \\ 1 & 1 & 0  & 1 \\ 1&1&1&0  \end{array} \right] x^2 +  \left( \left[ \begin{array}{rrrrr} 0& 2 & 1 & 0  \\ 0 & 0  & 2 & 1  \\ 1 & 0 & 0  & 2 \\ 2&1&0&0  \end{array} \right] - \left[ \begin{array}{rrrrr} 0& 0 & 1 & 2  \\ 2 & 0  & 0 & 1  \\ 1 & 2 & 0  & 0 \\ 0&1&2&0  \end{array} \right] \right) x^1   +  \left( \left[ \begin{array}{rrrrr} 0& 2 & 1 & 2  \\ 2 & 0  & 2 & 1  \\ 1 & 2 & 0  & 2 \\ 2&1&2&0  \end{array} \right] - \left[ \begin{array}{rrrrr} 0& 1 & 2 & 1  \\ 1 & 0  & 1 & 2  \\ 2 & 1 & 0  & 1 \\ 1&2&1&0  \end{array} \right] \right) x^0$}.\\\\

If we assign the following, 
\begin{equation*}
A_3 = 0
\end{equation*}
\begin{equation*}
A_2 = \left[ \begin{array}{rrrrr} 0&1 & 1 & 1  \\ 1 & 0  & 1 & 1  \\ 1 & 1 & 0  & 1 \\ 1&1&1&0  \end{array} \right] 
\end{equation*}

\begin{equation*}
A_1 =  \left[ \begin{array}{rrrrr} 0& 2 & 1 & 0  \\ 0 & 0  & 2 & 1  \\ 1 & 0 & 0  & 2 \\ 2&1&0&0  \end{array} \right] - \left[ \begin{array}{rrrrr} 0& 0 & 1 & 2  \\ 2 & 0  & 0 & 1  \\ 1 & 2 & 0  & 0 \\ 0&1&2&0  \end{array} \right]  =  \left[ \begin{array}{rrrrr} 0& 2 & 0 & -2  \\ -2 & 0  & 2 & 0  \\ 0 & -2 & 0  & 2 \\ 2&0&-2&0  \end{array} \right]
\end{equation*}

\begin{equation*}
A_0 =   \left[ \begin{array}{rrrrr} 0& 2 & 1 & 2  \\ 2 & 0  & 2 & 1  \\ 1 & 2 & 0  & 2 \\ 2&1&2&0  \end{array} \right] - \left[ \begin{array}{rrrrr} 0& 1 & 2 & 1  \\ 1 & 0  & 1 & 2  \\ 2 & 1 & 0  & 1 \\ 1&2&1&0  \end{array} \right] =  \left[ \begin{array}{rrrrr} 0& 1 & -1 & 1  \\ 1 & 0  & 1& -1  \\ -1 & 1 & 0  & 1 \\ 1&-1&1&0  \end{array} \right] 
\end{equation*}

 then we can now be rewrite the matrix polynomial as a transposed companion matrix, with $4 \times 4$-sized identity matrices, $I$, which we show along with its spectral signature, and the maximal positive (real-part) eigenvalue, an indication of its proximity to stability.
 
\begin{figure}[!h]
\centering
\begin{minipage}[b]{.50\textwidth}
\centering
\begin{equation*}
\left[ \begin{array}{rrrrrr} 
0 & I & 0 & 0 \\  
0 & 0 & I & 0 &  \\  
0 & 0 & 0 & I &  \\  
-A_0 & -A_1 & -A_2 & -A_3  
\end{array} \right]
\end{equation*}
\caption{$\operatorname{Re}(\lambda_{max}) = 1.547$}
\end{minipage}\hfill
\begin{minipage}[b]{.50\textwidth}
\centering
\includegraphics[height=3.5cm]{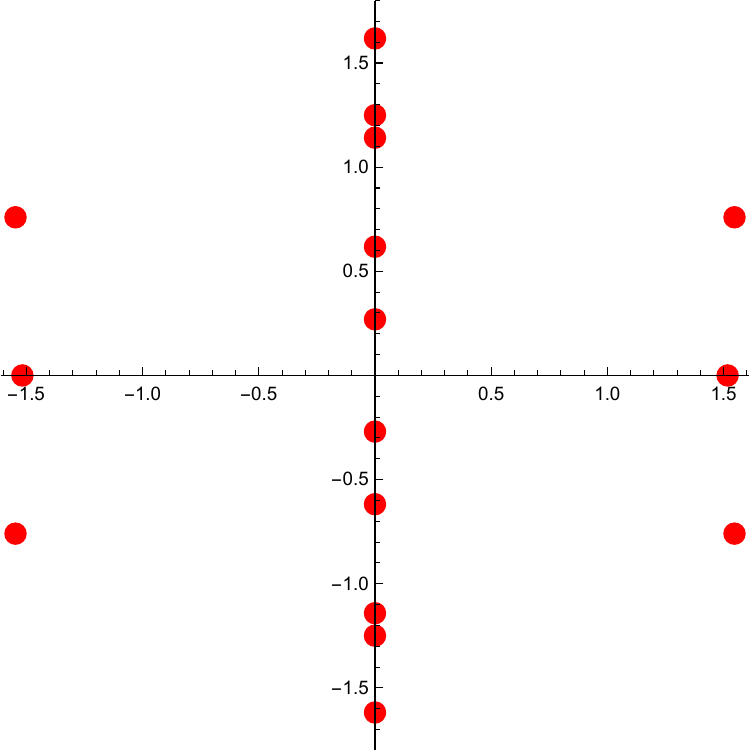}
\end{minipage}
\end{figure}

It is worth noting that the form of the weighted matrices so far described, whether the weights of the positive cycles, negative cycles, or their difference, are all circulant, that is, 
\begin{equation*}
\left[ \begin{array}{rrrr} 
\alpha & \beta & \gamma & \delta  \\  
\delta  & \alpha & \beta & \gamma  \\  
\gamma & \delta & \alpha & \beta  \\  
\beta & \gamma  & \delta  & \alpha  \\
\end{array} \right],
\end{equation*} 
although this is not a general property.

\section{The cycle spectrum}
The approach shown in the preceding section can equally be applied to matrices without consideration of the forcing term $\sigma$. The companion matrix is then constructed from the difference of the positive and negative weighted cycle matrices for all the coefficients of the characteristic polynomial in $x$. This provides what might be termed a canonical or intrinsic spectral signature for any sign-restricted (and zero element) system, from which properties associated solely with the structure, rather than any arbitrary values, can be determined. It is natural to consider the analogy with spectral graph theory ([7],[8]) and the explicit spectral relationship to a given form, merged with the way sign-restricted matrix forms are considered in term of their stability ([9],[10]). Figure 1 shows three examples of the construction of the companion matrix for a $3 \times 3$, $4 \times 4$ and $5 \times 5$ system, and their respective spectral signature and maximal (real-part) eigenvalue.\\\\
\begin{figure}
\begin{center}
\begin{tabular}{l l c c c}
 & &
  $\begin{bmatrix}  
   - & + & +  \\ - & - & + \\ -& - & -  
   \end{bmatrix} $ &
   $\begin{bmatrix}  
   - & + & + & + \\ - & - & + & + \\ - & - & - & + \\ - & - & - & - 
    \end{bmatrix}$ &
           $\begin{bmatrix}  
   - & + & + & + & +\\ - & - & + & + & + \\ - & - & - & + & + \\ - & - & - & - & + \\- & - & - & - & -
   \end{bmatrix}$   \\
     \medskip \\
&$A_0=$ & 
$\begin{bmatrix}  
   2 & 2 & 0  \\ 0 & 2 & 2 \\ 2 & 0 & 2  
   \end{bmatrix} $ 
   &
   $\begin{bmatrix}  
   4 & 4 & 0 & 0 \\ 0 & 4 & 4 & 0 \\ 0 & 0 & 4 & 4 \\ 4 & 0 & 0 & 4 
    \end{bmatrix}$ 
    &
           $\begin{bmatrix}  
   8 & 8 & 0 & 0 & 0 \\ 0 & 8 & 8 & 0 & 0 \\ 0 & 0 & 8 & 8 & 0 \\ 0 & 0 & 0 & 8 & 8 \\ 
   8 & 0 & 0 & 0 & 8
   \end{bmatrix}$   \\
     \medskip \\
     \end{tabular}
\end{center}
\begin{center}
\begin{tabular}{l l c c c}
& $A_1=$ 
 & 
    $\begin{bmatrix}  
   2 & 1 & 1  \\ 1 & 2 & 1 \\ 1 & 1 & 2 
   \end{bmatrix}$ 
   &
   $\begin{bmatrix}  
   6 & 4 & 2 & 0 \\ 0 & 6& 4 & 2 \\ 2 & 0 & 6 & 4 \\ 4 & 2 & 0 & 6 
    \end{bmatrix}$ 
    &
   $\begin{bmatrix}  
   16 & 12 & 4 & 0 & 0 \\ 0 & 16 & 12 & 4 & 0 \\ 0 & 0 & 16 & 12 & 4\\ 4 & 0 & 0 & 16 & 12 
\\ 12 & 4 & 0 & 0 & 16   \end{bmatrix}$  \\
   \medskip \\
\end{tabular}
\end{center}
\begin{center}
\begin{tabular}{l l c c c}
& $A_2=$   &
     $\begin{bmatrix}  
   1 & 0 & 0 \\ 0 & 1 & 0  \\ 0 & 0 & 1 
   \end{bmatrix} $ 
   &
   $\begin{bmatrix}  
   3 & 1 & 1 & 1 \\ 1 & 3 & 1 & 1 \\ 1 & 1 & 3 & 1 \\ 1 & 1 & 1 & 3 
    \end{bmatrix}$ 
    &
   $\begin{bmatrix}  
   12 & 6 & 4 & 2 & 0 \\ 0 & 12 & 6 & 4 & 2 \\ 2 & 0 & 12 & 6 & 4 \\ 4 & 2 & 0 & 12 & 6 
\\ 6 & 4 & 2 & 0 & 12
\end{bmatrix}$   \\
     \medskip \\
& $A_3=$   &
    &
   $\begin{bmatrix}  
   1 & 0 & 0 & 0 \\ 0 & 1 & 0 & 0 \\ 0 & 0 & 1 & 0 \\ 0 & 0 & 0 & 1 
    \end{bmatrix}$ 
    &
   $\begin{bmatrix}  
   4 & 1 & 1 & 1 & 1 \\ 1 & 4 & 1 & 1 &1 \\ 1 & 1 & 4 & 1 & 1  \\ 1 & 1 & 1 & 4 & 1 \\ 1 & 1 & 1 & 1 & 4   \end{bmatrix}$ \\
    \medskip \\
& $A_4=$    &
    &
 &
   $\begin{bmatrix}  
   1 & 0 & 0 & 0 &0 \\ 0 & 1 & 0 & 0 &0 \\ 0 & 0 & 1 & 0 & 0  \\ 0 & 0 & 0 & 1 & 0 \\ 0 & 0 & 0 & 0  & 1   \end{bmatrix}$   
\medskip \\
\end{tabular}
\end{center} 

\minipage{0.25\textwidth}
  \includegraphics[width=0.9\linewidth]{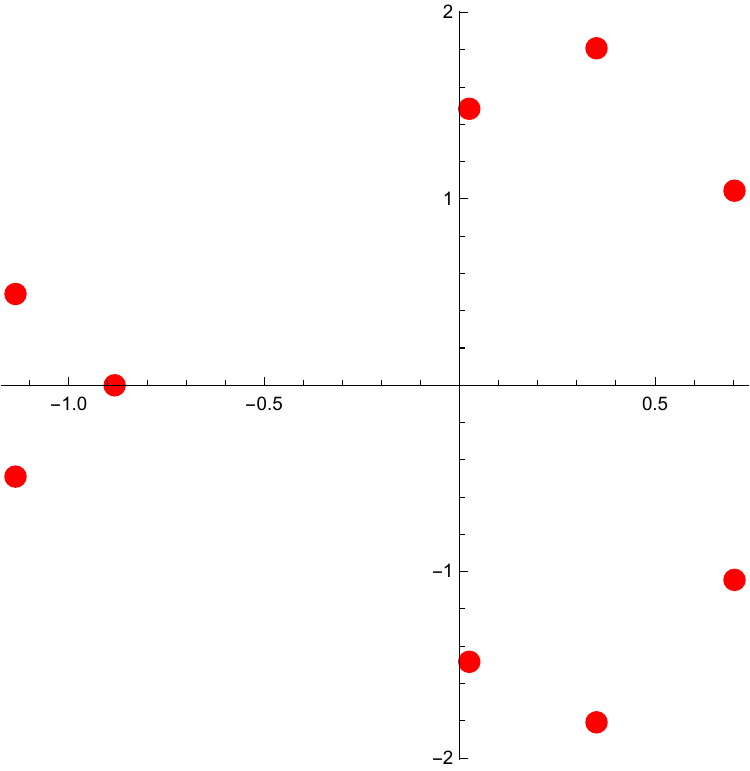}
  \caption{$\operatorname{Re}(\lambda_{max}) = 0.703$}
\endminipage\hfill
\minipage{0.25\textwidth}
  \includegraphics[width=0.9\linewidth]{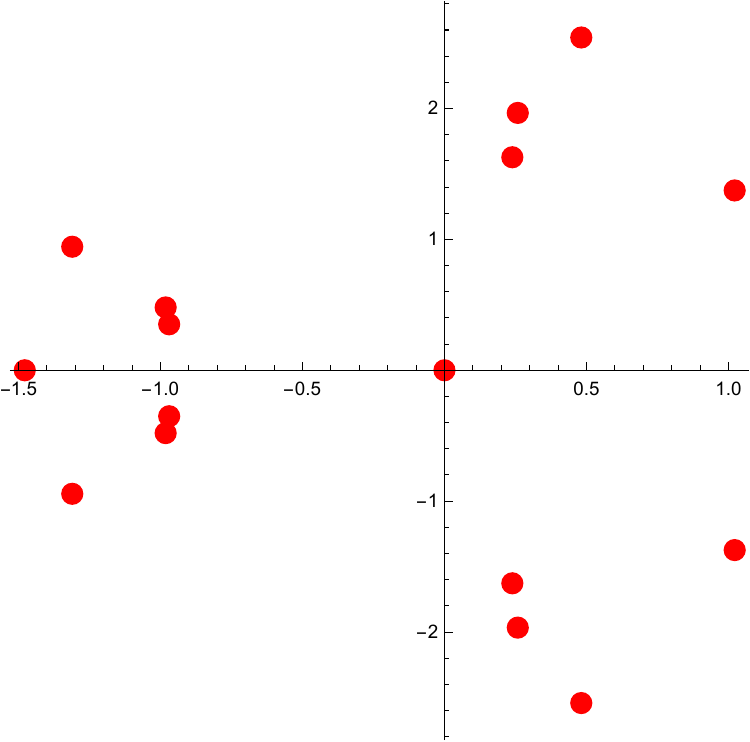}
  \caption{$\operatorname{Re}(\lambda_{max}) = 1.022$}
\endminipage\hfill
\minipage{0.25\textwidth}%
  \includegraphics[width=0.9\linewidth]{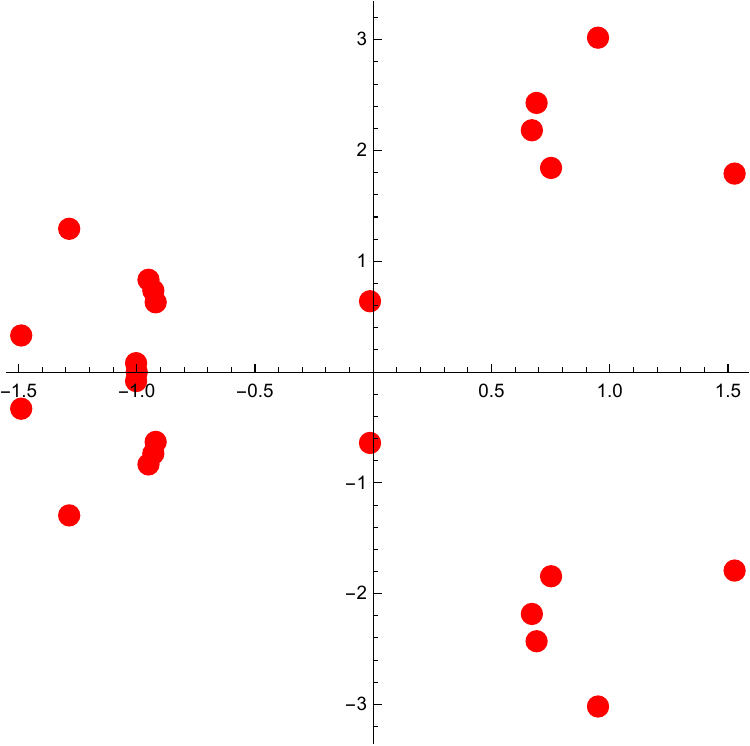}
  \caption{$\operatorname{Re}(\lambda_{max}) = 1.528$}  
\endminipage
\caption{\bf{Figure 1} Sign-restricted structure of a $3 \times 3$, $4 \times 4$ and $5 \times 5$ system with their matrix polynomial coefficients, spectral signature and maximal eigenvalue}
\label{fig:cyclicspectrum}
\end{figure}
As an example of how one can examine the individual coefficient matrix weights to consider the intrinsic stability of the system structure, consider the $2 \times 2$ forms in Figure 2.\\
 
\begin{figure}[h]
\centering
\begin{minipage}[b]{.30\textwidth}
a \begin{equation*}
\left[ \begin{array}{rr} 
- & -   \\  
- & -   \\  
\end{array} \right]
\end{equation*}
\centering
\includegraphics[height=2cm]{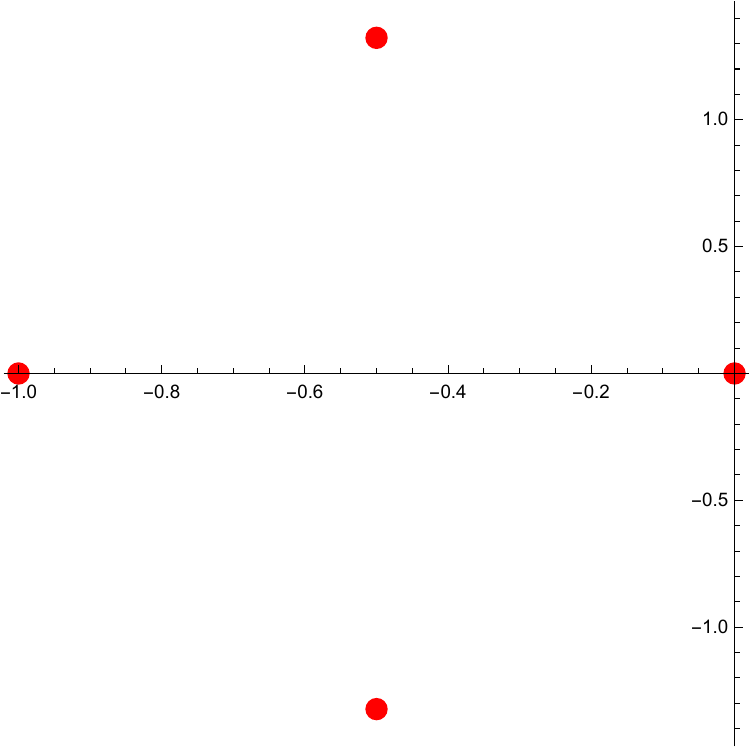}
\begin{equation*}
\operatorname{Re}(\lambda_{max}) = 0 
\end{equation*}
\end{minipage}
\begin{minipage}[b]{.30\textwidth}
b \begin{equation*}
\left[ \begin{array}{rr} 
- & +   \\  
- & -   \\  
\end{array} \right]
\end{equation*}
\centering
\includegraphics[height=2cm]{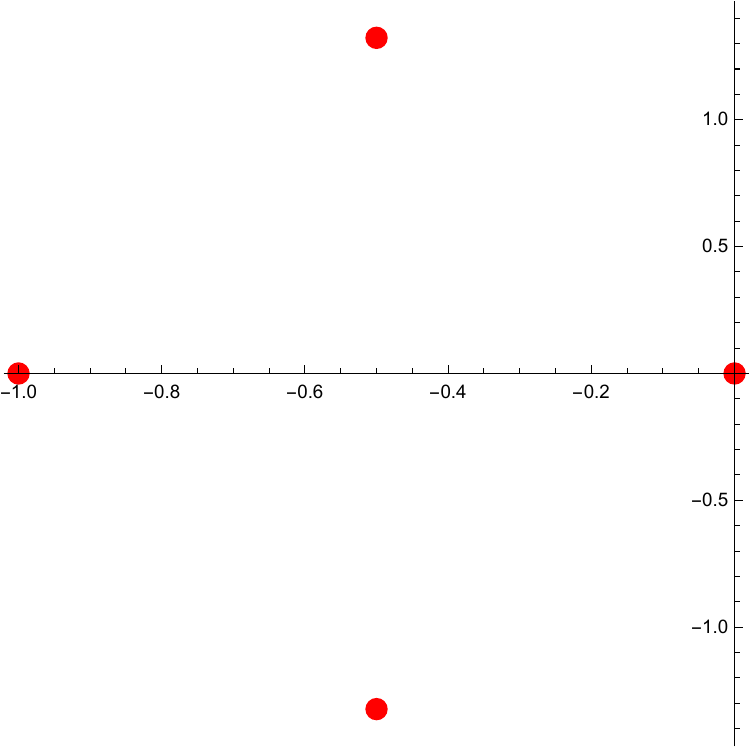}
\begin{equation*}
\operatorname{Re}(\lambda_{max}) = 0
\end{equation*}
\end{minipage}
\caption{}
\end{figure}
\begin{figure}[h]
\centering
\begin{minipage}[b]{.30\textwidth}
c \begin{equation*}
\left[ \begin{array}{rr} 
+ & -   \\  
- & -   \\  
\end{array} \right]
\end{equation*}
\centering
\includegraphics[height=2cm]{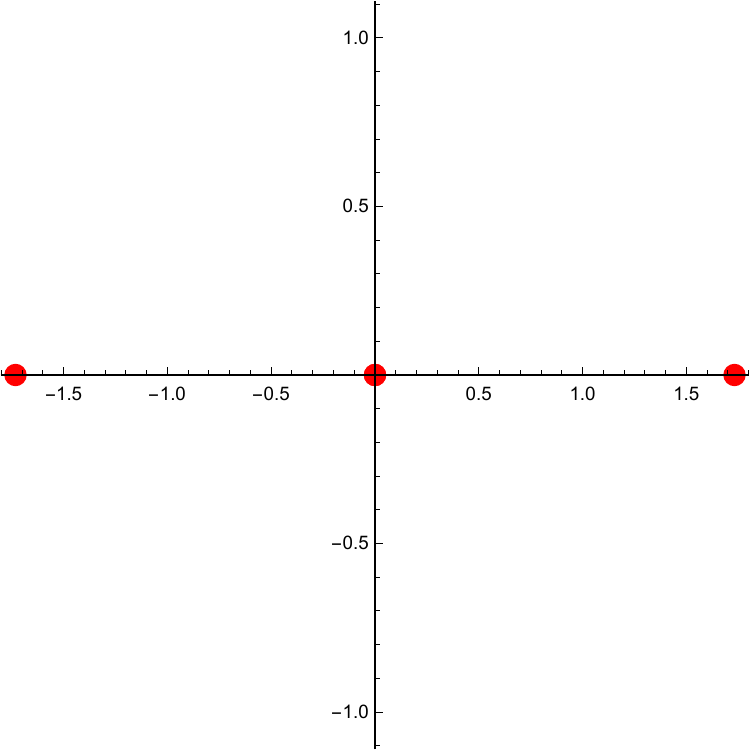}
\begin{equation*}
\operatorname{Re}(\lambda_{max}) = 1.732 
\end{equation*}
\end{minipage}
\begin{minipage}[b]{.30\textwidth}
d \begin{equation*}
\left[ \begin{array}{rr} 
- & 0   \\  
- & -   \\  
\end{array} \right]
\end{equation*}
\centering
\includegraphics[height=2cm]{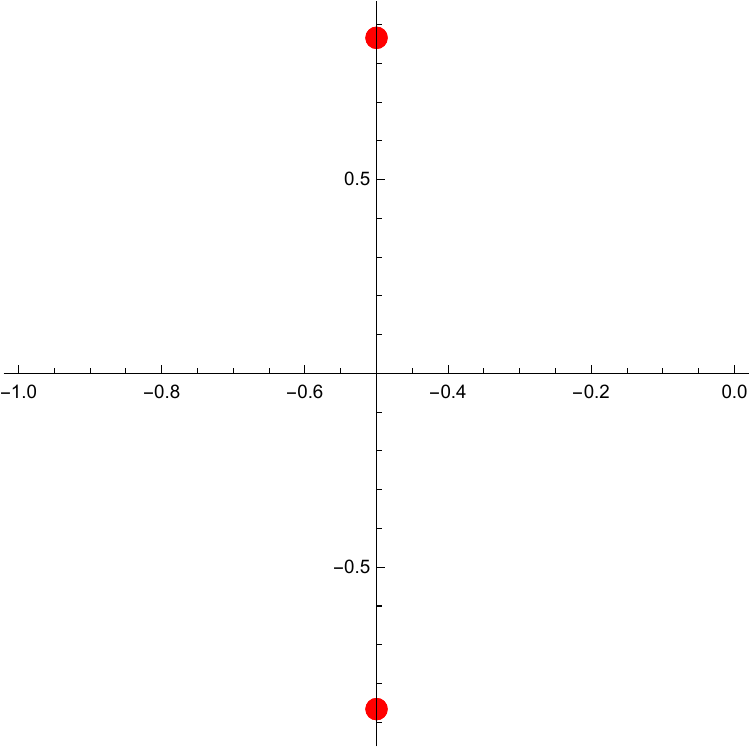}
\begin{equation*}
\operatorname{Re}(\lambda_{max}) = -0.5 
\end{equation*}
\end{minipage}
\caption{\bf{Figure 2} Structure, spectral signature and maximal real-part eigenvalue of four $2 \times 2$ systems}
\label{fig:2times}
\end{figure}

\vspace{0.3cm}
The associated matrix polynomial of figure 2a is,\\ \\

\hspace{0.5cm}
\resizebox{0.8\hsize}{!}
{$x^2 + \left( \left[ \begin{array}{rr} 1&0  \\ 0 & 1   \end{array} \right] - \left[ \begin{array}{rr} 0& 0   \\ 0 & 0  \end{array} \right] \right) x +  \left( \left[ \begin{array}{rr} 1& 0 \\ 0 & 1   \end{array} \right] - \left[ \begin{array}{rr} 0& 1   \\ 1 & 0  \end{array} \right] \right)$}. \\\\

Shifting the negative cycle weights in the $x_0$ coefficient over to the positive side, produces the matrix polynomial of figure 2b,  \\ \\ 

\hspace{0.5cm}
\resizebox{0.8\hsize}{!}
{$x^2 + \left( \left[ \begin{array}{rr} 1& 0 \\ 0 & 1   \end{array} \right] - \left[ \begin{array}{rr} 0& 0   \\ 0 & 0  \end{array} \right] \right) x +  \left( \left[ \begin{array}{rr} 1& 1 \\ 1 & 1   \end{array} \right] - \left[ \begin{array}{rr} 0& 0   \\ 0 & 0  \end{array} \right] \right)$}\\\\

which has the effect of making the coefficient more positive and therefore the system more or equally stable (in this case equally). In Figure 2c, making the top left element positive instead of the top right, has a more dramatic effect, shifting all the elements in $x^0$ into negative cycles, as well as one of the elements in $x$, \\ \\  

\hspace{0.5cm}
\resizebox{0.8\hsize}{!}
{$x^2 + \left( \left[ \begin{array}{rr} 0& 0 \\ 0 & 1   \end{array} \right] - \left[ \begin{array}{rr} 1& 0   \\ 0 & 0  \end{array} \right] \right) x +  \left( \left[ \begin{array}{rr} 0& 0 \\ 0 & 0   \end{array} \right] - \left[ \begin{array}{rr} 1& 1   \\ 1 & 1  \end{array} \right] \right)$}
\\\\
which produces a much larger maximal eigenvalue. However, if we instead make the top right element zero as in Figure 2d, then not only are all the elements in the positive cycles, but we also minimise their number\\ \\

\hspace{0.5cm}
\resizebox{0.8\hsize}{!}
{$x^2 + \left( \left[ \begin{array}{rr} 1& 0 \\ 0 & 1   \end{array} \right] - \left[ \begin{array}{rr} 0& 0   \\ 0 & 0  \end{array} \right] \right) x +  \left( \left[ \begin{array}{rr} 1& 0 \\ 0 & 1   \end{array} \right] - \left[ \begin{array}{rr} 0& 0   \\ 0 & 0  \end{array} \right] \right)$}.\\ \\

\hspace{-0.5cm}
This system, along with its cospectral forms, 

\begin{equation*}    
\centering
\left[\begin{array}{rr} -& - \\ 0 & -  \end{array} \right]  \left[\begin{array}{rr} -& 0 \\ 0 & -   \end{array}\right] \left[\begin{array}{rr} -& + \\ 0 & -  \end{array} \right]
\left[\begin{array}{rr} -& 0 \\ + & -  \end{array} \right]
\end{equation*} \\

\hspace{-0.5cm}
has a maximal (real-part) eigenvalue of $-0.5$ which is the smallest possible maximal eigenvalue (see Figure 3) that we could get from the $3^{2 \times 2}$ $2 \times 2$ systems. Therefore, structurally these five systems are intrinsically the most stable.  
\begin{figure}[!h]
\centering 
 \includegraphics[width=0.80\linewidth]{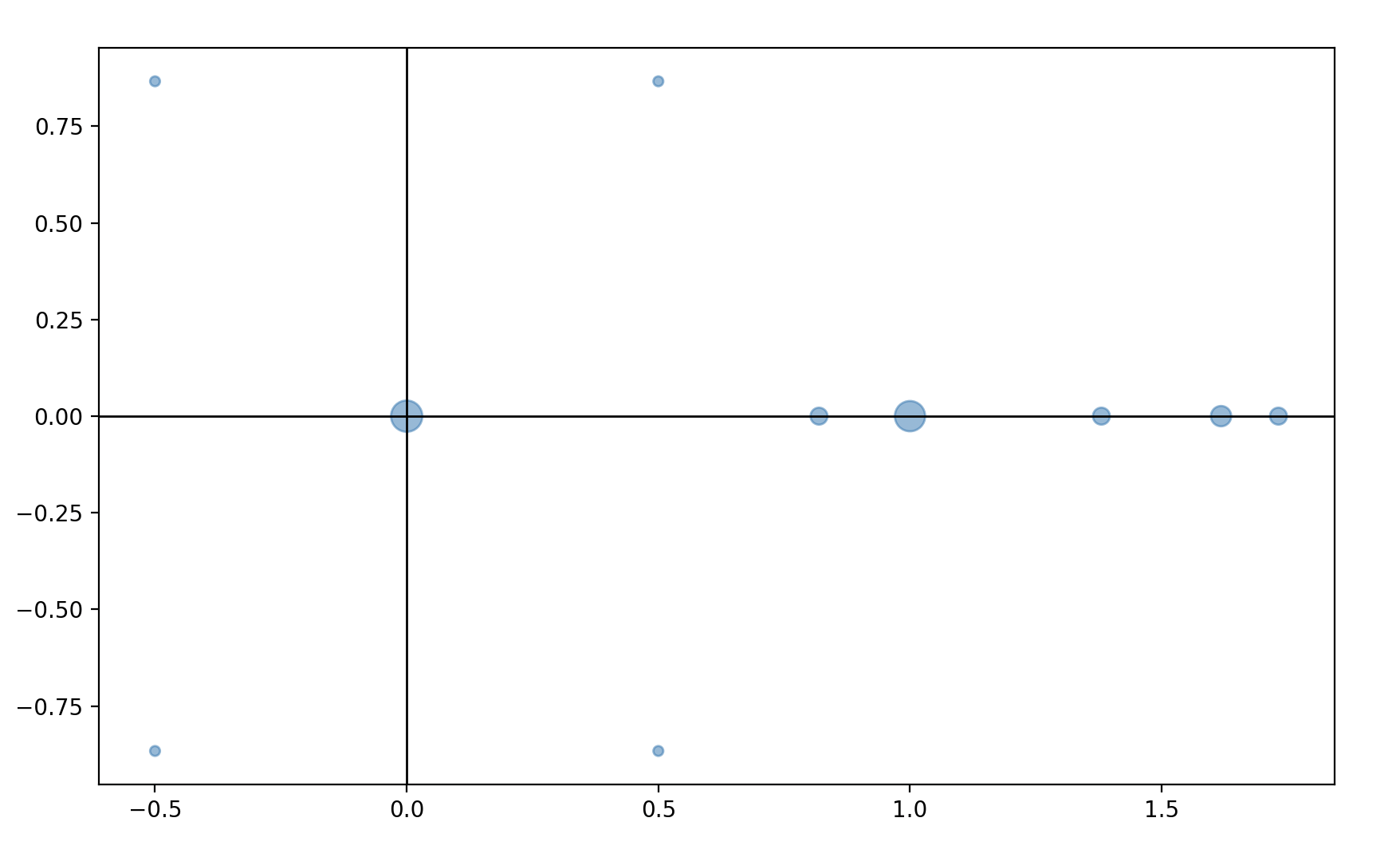}
 \caption{\bf{Figure 3} The 81 $2 \times 2$ possible matrices collapse to 12 cospectral and 8 comaximal types. Shown here are the maximal values for all the systems, with the point size an indication of the relative number of systems sharing the maximal value.}
 \label{fig:maximal}
\end{figure}
\newpage

\centerline {\bf References}
\medskip
\noindent [1] M. Gardner, W. Ashby. Connectance of large dynamic (cybernetic) systems: critical values for stability. Nature, 228(5273):784, 1970.
\medskip

\noindent [2] M. Thorne, E. Forgoston, L. Billings, and A-M. Neutel. Matrix scaling and tipping points. SIAM J. Appl. Dyn. Syst.,
20(2):1090-1103, 2021.
\medskip

\noindent [3] S. Gerschgorin, \"{U}ber die Abgrenzung der Eigenwerte einer Matrix. Izv. Akad. Nauk. USSR Otd. Fiz.-Mat, 6:749-754, 1931.
\medskip

\noindent [4] R. Descartes, La G\'{e}om\'{e}trie. (Paris), 1637. 
\medskip

\noindent [5] N. Obrechkoff, Sur un probl\`{e}me de Laguerre. C.R. Acd. Sci. (Paris), 177:223-235, 1923.
\medskip

\noindent [6] M. Thorne. Tipping cycles. Linear Algebra Appl., 646:43-53, 2022.
\medskip

\noindent [7] M. Fiedler. Algebraic connectivity of graphs. Czechoslovak Mathematical Journal, Vol 23, No. 2 298-305, 1973
\medskip

\noindent [8] N. Biggs Algebraic Graph Theory. Cambridge University Press, Cambridge, 1974
\medskip

\noindent [9] J. Maybee, J. Quirk. Qualitative problems in matrix theory. SIAM Rev., 11, pp. 30-51, 1969
\medskip

\noindent [10] C. Jeffries, V. Klee, P. van der Driessche. When is a matrix sign stable? Can. J. Math., 29, pp. 315-326, 1977

\end{document}